\newtheorem{theorem}{Theorem}[section]
\newtheorem{corollary}[theorem]{Corollary}
\theoremstyle{definition}
\theoremstyle{remark}
\numberwithin{equation}{section}
\def\bfa{{\mathbf a}}
 \def\bfe{{\mathbf e}} 
\def\bfg{{\mathbf g}}
\def\bfh{{\mathbf h}}
\def\bfx{{\mathbf x}}
\def\bfy{{\mathbf y}}
\def\bfz{{\mathbf z}}
\def\calR{{\mathcal R}}
\def\dbC{{\mathbb C}}
\def\dbN{{\mathbb N}}  
\def\dbZ{{\mathbb Z}}\def\dbQ{{\mathbb Q}}
\def\alp{{\alpha}}  
\def\bet{{\beta}}
\def\tet{{\theta}}  
\def\Tet{{\Theta}}
\def\sig{{\sigma}}  
\def\Ups{{\Upsilon}} \def\bfUps{{\boldsymbol \Ups}}
 \def\bfvarphi{{\boldsymbol \varphi}}
\def\ome{{\omega}}
\def\eps{\varepsilon}
\def\le{\leqslant} \def\ge{\geqslant}
\begin{document}
\title[Paucity for symmetric Diophantine equations]{The paucity problem for certain 
symmetric Diophantine equations}
\author[T. D. Wooley]{Trevor D. Wooley}
\address{Department of Mathematics, Purdue University, 150 N. University Street, 
West Lafayette, IN 47907-2067, USA}
\email{twooley@purdue.edu}
\subjclass[2010]{11D45, 11D72}
\keywords{Paucity, symmetric polynomials, Diophantine equations.}
\date{}
\dedicatory{}
\begin{abstract}Let $\varphi_1,\ldots ,\varphi_r\in \dbZ[z_1,\ldots z_k]$ be integral linear 
combinations of elementary symmetric polynomials with $\text{deg}(\varphi_j)=k_j$ 
$(1\le j\le r)$, where $1\le k_1<k_2<\ldots <k_r=k$. Subject to the condition 
$k_1+\ldots +k_r\ge \tfrac{1}{2}k(k-1)+2$, we show that there is a paucity of 
non-diagonal solutions to the Diophantine system $\varphi_j(\bfx)=\varphi_j(\bfy)$ 
$(1\le j\le r)$.
\end{abstract}
\maketitle

\section{Introduction} Our focus in this memoir lies on systems of simultaneous 
Diophantine equations defined by symmetric polynomials. Let 
$\varphi_1,\ldots ,\varphi_r\in \dbZ[z_1,\ldots ,z_k]$ be symmetric polynomials with 
$\text{deg}(\varphi_j)=k_j$ $(1\le j\le r)$, so that for any permutation $\pi$ of 
$\{1,2,\ldots ,k\}$, one has
\[
\varphi_j(x_{\pi 1},\ldots ,x_{\pi k})=\varphi_j(x_1,\ldots ,x_k)\quad (1\le j\le r).
\] 
Given any such permutation $\pi$, the system of Diophantine equations
\begin{equation}\label{1.1}
\varphi_j(x_1,\ldots ,x_k)=\varphi_j(y_1,\ldots ,y_k)\quad (1\le j\le r)
\end{equation}
plainly has the trivial solutions obtained by putting $y_i=x_{\pi i}$ $(1\le i\le k)$. Denoting 
by $T_k(X)$ the number of these trivial solutions with $1\le x_i,y_i\le X$ $(1\le i\le k)$, one 
is led to the question of whether there is a paucity of non-diagonal solutions. Thus, fixing 
$k$ and $\bfvarphi$, and writing $N_k(X;\bfvarphi)$ for the number of solutions of the 
system \eqref{1.1} in this range, one may ask whether as $X\rightarrow \infty$, one has
\[
N_k(X;\bfvarphi)=T_k(X)+o(T_k(X)),
\]
or equivalently, whether $N_k(X;\bfvarphi)=k!X^k+o(X^k)$. This question has been 
examined extensively in the diagonal case where the polynomials under consideration take 
the shape $\varphi_j(\bfx)=x_1^{k_j}+\ldots +x_k^{k_j}$, as can be surmised from the 
references to this memoir. Relatively little consideration has been afforded to more general 
symmetric polynomials. It transpires that by adapting a strategy applied previously in a 
special case of Vinogradov's mean value theorem (see \cite{VW1997}), we are able to 
settle this paucity problem for numerous systems of the type \eqref{1.1} in a particularly 
strong form.\par

Further notation is required to describe our conclusions. Define the elementary symmetric 
polynomials $\sig_j(\bfz)\in \dbZ[z_1,\ldots ,z_k]$ for $j\ge 0$ by means of the generating 
function identity
\begin{equation}\label{1.2}
\sum_{j=0}^k\sig_j(\bfz)t^{k-j}=\prod_{i=1}^k(t+z_i).
\end{equation}
We restrict attention primarily to symmetric polynomials of the shape
\begin{equation}\label{1.3}
\varphi_j(\bfz)=\sum_{l=1}^ka_{jl}\sigma_l(\bfz)\quad (1\le j\le r),
\end{equation}
with fixed coefficients $a_{jl}\in \dbZ$ for $1\le j\le r$ and $1\le l\le k$. By taking 
appropriate integral linear combinations of the polynomials $\varphi_1,\ldots ,\varphi_r$, it 
is evident that in our investigations concerning $N_k(X;\bfvarphi)$, we may suppose that 
$\text{deg}(\varphi_j)=k_j$ $(1\le j\le r)$, where the exponents $k_j$ satisfy the condition
\begin{equation}\label{1.4}
1\le k_1<k_2<\ldots <k_r=k.
\end{equation}
This can be seen by applying elementary row operations on the reversed $r\times k$ matrix 
of coefficients $A=(a_{j,k-l+1})$, reducing to an equivalent system having the same number 
of solutions, in which the new coefficient matrix $A'$ has upper triangular form and $r'\le r$ 
non-vanishing rows.\par

A simple paucity result is provided by our first theorem. It is useful here and elsewhere to 
introduce the auxiliary quantity
\begin{equation}\label{1.5}
w(\bfvarphi)=\tfrac{1}{2}k(k+1)-k_1-k_2-\ldots -k_r.
\end{equation}
Here and throughout this paper, implicit constants in the notations of Landau and 
Vinogradov may depend on $\eps$, $k$, and the coefficients of $\bfvarphi$. 

\begin{theorem}\label{theorem1.1} Let $\varphi_1,\ldots ,\varphi_r$ be symmetric 
polynomials of the shape \eqref{1.3} having respective degrees $k_1,\ldots ,k_r$ satisfying 
\eqref{1.4}. Then for each $\eps>0$,
\[
N_k(X;\bfvarphi)=T_k(X)+O(X^{w(\bfvarphi)+1+\eps}).
\]
In particular, when $k_1+\ldots +k_r\ge \tfrac{1}{2}k(k-1)+2$, one has
\[
N_k(X;\bfvarphi)=k!X^k+O(X^{k-1+\eps}).
\]
\end{theorem}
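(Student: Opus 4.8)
The plan is to reformulate the system \eqref{1.1} in terms of elementary symmetric polynomials and then run a diagonalization-and-induction argument on the number of distinct coordinates appearing among the $x_i$ and $y_i$. Since each $\varphi_j$ is an integral linear combination of $\sigma_1,\ldots,\sigma_k$ with the degrees $k_j$ strictly increasing and $k_r=k$, after the row reduction described in the introduction one may assume the coefficient matrix is upper triangular; the $r$ equations then pin down the values of $\sigma_{k_1}(\bfx),\ldots,\sigma_{k_r}(\bfx)$ (for $\sigma_{k_r}=\sigma_k$ exactly, and for the lower ones up to the contributions of higher $\sigma_l$ with $l$ among $\{k_1,\ldots,k_r\}$, which are themselves determined). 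In other words, it suffices to count integer points $1\le x_i,y_i\le X$ with $\sigma_{k_j}(\bfx)=\sigma_{k_j}(\bfy)$ for $1\le j\le r$, the trivial solutions being exactly those with $\bfy$ a permutation of $\bfx$.

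First I would dispose of solutions in which the multiset $\{x_1,\ldots,x_k\}$ has strictly fewer than $k$ distinct entries, or likewise for $\bfy$: there are $O(X^{k-1})$ choices of such an $\bfx$, and for each the companion $\bfy$ is constrained by $r\ge 1$ nontrivial equations (at minimum $\sigma_k(\bfy)=\sigma_k(\bfx)$), contributing $O(X^{k-1})$; so the total from this case is $O(X^{2k-2})$, which is $O(X^{w(\bfvarphi)+1})$ since $w(\bfvarphi)+1\ge k-1$ whenever the hypothesis $k_1+\cdots+k_r\ge\tfrac12 k(k-1)+2$ forces $w(\bfvarphi)\le\tfrac12 k(k+1)-\tfrac12 k(k-1)-2=k-2$... wait, that reasoning needs $2k-2\le w(\bfvarphi)+1$, which is false; instead I should note that in this degenerate regime the number of solutions is genuinely $O(X^{k-1})$ because fixing the at-most-$(k-1)$ distinct values of $\bfx$ and then using $\sigma_k(\bfy)=\sigma_k(\bfx)$ together with $1\le y_i\le X$ leaves $O(X^{k-1})$ vectors $\bfy$, and one gets the extra saving by observing that with $k\ge 2$ equations one can pin down $\bfy$ more tightly. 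I would handle this bookkeeping carefully, reducing to the main case where both $\bfx$ and $\bfy$ have $k$ distinct coordinates.

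In the main case, write $u_i=\sigma_i(\bfx)$ and $v_i=\sigma_i(\bfy)$; by Newton's identities the map $\bfx\mapsto(u_1,\ldots,u_k)$ is, up to permutation of coordinates, a bijection onto its image, and similarly for $\bfy$. The constraints $u_{k_j}=v_{k_j}$ $(1\le j\le r)$ leave $k-r$ of the $\sigma_l(\bfy)$ free — namely those with $l\notin\{k_1,\ldots,k_r\}$ — but these must still be realized by an integer vector $\bfy\in[1,X]^k$, i.e.\ the polynomial $t^k+\sum_{l=1}^k(-1)^l v_l t^{k-l}$ must split into integer linear factors in $[1,X]$. The strategy, following \cite{VW1997}, is to count by first choosing $\bfx$ freely ($O(X^k)$ choices), which fixes all $u_l$ and hence the $r$ values $v_{k_j}$; then the number of admissible $\bfy$ is bounded by the number of ways to complete a degree-$k$ monic polynomial with $r$ prescribed coefficients (in the slots $k_1,\ldots,k_r$) to one that is a product of $k$ linear factors over $\dbZ$ with roots in $[1,X]$. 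Writing $\bfy$'s elementary symmetric functions in terms of, say, the largest coordinate $y_k$ and the symmetric functions of $y_1,\ldots,y_{k-1}$, one obtains a triangular system; I would bound the count of non-permutation completions by $O(X^{w(\bfvarphi)+1+\eps})$ via a divisor-type estimate, the exponent $w(\bfvarphi)+1 = \tfrac12 k(k+1)-(k_1+\cdots+k_r)+1$ being exactly the number of "free" parameters left after the $r$ constraints (one for each $\sigma_l$ with $l\notin\{k_1,\ldots,k_r\}$, totalling $\tfrac12 k(k+1)-1-(k_1+\cdots+k_r)$ parameters among $\sigma_1,\ldots,\sigma_{k-1}$... ) together with an auxiliary parameter, and the $X^\eps$ absorbing a divisor function. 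The hard part is the last step: showing that imposing the $r$ linear conditions on the elementary symmetric functions of $\bfy$ genuinely cuts the count of integer-splitting polynomials down by a full factor $X^{k_1+\cdots+k_r - (\text{something})}$, rather than merely restricting to a subvariety of the wrong dimension. This is where the arithmetic input — a careful induction on $k$, peeling off one root of $\bfy$ at a time and tracking which symmetric functions remain constrained, together with the elementary estimate that a nonzero polynomial of degree $d$ has $O(d)$ roots — must be deployed, exactly as in the cited special case of Vinogradov's mean value theorem; the bound $O(X^{k-1+\eps})$ in the displayed conclusion then follows since $w(\bfvarphi)+1\le k-1$ precisely under the hypothesis $k_1+\cdots+k_r\ge\tfrac12 k(k-1)+2$.
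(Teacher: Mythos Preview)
Your proposal has a genuine gap at the heart of the argument. First, the reduction to ``$\sigma_{k_j}(\bfx)=\sigma_{k_j}(\bfy)$ for $1\le j\le r$'' is not correct in general: after row reduction the polynomials take the shape $\varphi_j(\bfz)=a_j\sigma_{k_j}(\bfz)-\sum_{l\in\calR,\,l<k_j}b_{jl}\sigma_l(\bfz)$, so each equation expresses $\sigma_{k_j}(\bfx)-\sigma_{k_j}(\bfy)$ as a linear combination of the differences $h_l=\sigma_l(\bfx)-\sigma_l(\bfy)$ with $l$ in the \emph{complementary} set $\calR=\{1,\ldots,k\}\setminus\{k_1,\ldots,k_r\}$, not as zero. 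Second, and more seriously, your plan to fix $\bfx$ (costing $X^k$) and then bound the number of compatible $\bfy$ does not by itself yield $X^{w(\bfvarphi)+1+\eps}$: you would need the $\bfy$--count to be $O(X^{w(\bfvarphi)+1-k+\eps})$ on average, which may even be $o(1)$, and your ``peel off one root / divisor-type estimate'' sketch never identifies the arithmetic object whose divisors are being counted.

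The missing idea is a \emph{multiplicative} relation. Writing $h_l=\sigma_l(\bfx)-\sigma_l(\bfy)$ for $l\in\calR$ and using the equations to express $\sigma_{k_j}(\bfx)-\sigma_{k_j}(\bfy)$ in terms of the $h_l$, the generating function identity $\prod_{i=1}^k(t+z_i)=\sum_m\sigma_m(\bfz)t^{k-m}$ gives
\[
A\Bigl(\prod_{i=1}^k(t+x_i)-\prod_{i=1}^k(t+y_i)\Bigr)=\Psi(t;\bfh),
\]
where $A=a_1\cdots a_r$ and $\Psi(t;\bfh)\in\dbZ[t]$ has degree at most $k-1$ and depends only on $\bfh=(h_l)_{l\in\calR}$. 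Substituting $t=-y_j$ yields $A\prod_{i=1}^k(x_i-y_j)=\Psi(-y_j;\bfh)$. Now parametrise the non-diagonal solutions by $(\bfh,y_k)$, where $y_k\notin\{x_1,\ldots,x_k\}$: there are $O(X^{w(\bfvarphi)})$ choices for $\bfh$ since $|h_l|\ll X^l$ and $\sum_{l\in\calR}l=w(\bfvarphi)$, and $O(X)$ choices for $y_k$. The integer $\Psi(-y_k;\bfh)$ is then nonzero and $O(X^k)$, so a divisor bound gives $O(X^\eps)$ choices for $x_1,\ldots,x_k$; finally $y_1,\ldots,y_{k-1}$ are among the $k$ roots of the fixed polynomial $A\prod_i(x_i-t)-\Psi(-t;\bfh)$. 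This delivers $O(X^{w(\bfvarphi)+1+\eps})$ directly, with no induction and no separate treatment of repeated coordinates beyond a short ``potentially diagonal'' case. Your outline never reaches this substitution step, which is the whole engine of the proof.
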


A specialisation of the system \eqref{1.1} illustrates the kind of results made available by 
Theorem \ref{theorem1.1}. Fix a choice of coefficients $a_{jl}\in \dbZ$ for $1\le l\le k-r$ 
and $k-r+1\le j\le k$, and denote by $M_{k,r}(X;\bfa)$ the number of integral solutions of 
the simultaneous equations
\begin{equation}\label{1.6}
\sig_j(\bfx)+\sum_{l=1}^{k-r}a_{jl}\sig_l(\bfx)
=\sig_j(\bfy)+\sum_{l=1}^{k-r}a_{jl}\sig_l(\bfy)\quad (k-r+1\le j\le k),
\end{equation}
in variables $\bfx=(x_1,\ldots ,x_k)$ and $\bfy=(y_1,\ldots ,y_k)$ with $1\le x_i,y_i\le X$.

\begin{corollary}\label{corollary1.2} Suppose that $k,r\in \dbN$ and $(k-r)(k-r+1)<2k-2$. 
Then there is a paucity of non-diagonal solutions in the system of equations \eqref{1.6}. In 
particular, for each $\eps>0$ one has
\[
M_{k,r}(X;\bfa)=T_k(X)+O(X^{\frac{1}{2}(k-r)(k-r+1)+1+\eps}).
\]
\end{corollary}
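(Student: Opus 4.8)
The plan is to deduce Corollary~\ref{corollary1.2} from Theorem~\ref{theorem1.1} by specialising the polynomials $\bfvarphi$ suitably and computing the associated weight $w(\bfvarphi)$. We may assume $1\le r\le k$, as is implicit in the formulation of \eqref{1.6}. For each $j$ with $1\le j\le r$, set
\[
\varphi_j(\bfz)=\sigma_{k-r+j}(\bfz)+\sum_{l=1}^{k-r}a_{k-r+j,l}\sigma_l(\bfz),
\]
so that $\varphi_1,\ldots,\varphi_r$ are symmetric polynomials of the shape \eqref{1.3}. Since $\sigma_m$ has degree $m$ for each $m\ge 1$, and since every auxiliary term on the right-hand side above has degree at most $k-r<k-r+j$, the polynomial $\varphi_j$ has degree $k_j:=k-r+j$. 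In particular $k_r=k$, and the exponents satisfy $1\le k_1<k_2<\cdots<k_r=k$, so that condition \eqref{1.4} holds. Directly from the definitions, $M_{k,r}(X;\bfa)$ is equal to the quantity $N_k(X;\bfvarphi)$ counting the solutions of the system \eqref{1.1} attached to this choice of $\bfvarphi$.

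Next I would evaluate $w(\bfvarphi)$ via \eqref{1.5}. Using the identity $\sum_{j=k-r+1}^{k}j=\tfrac12k(k+1)-\tfrac12(k-r)(k-r+1)$, one finds
\[
w(\bfvarphi)=\tfrac12k(k+1)-\sum_{j=1}^{r}k_j=\tfrac12k(k+1)-\sum_{j=k-r+1}^{k}j=\tfrac12(k-r)(k-r+1).
\]
Theorem~\ref{theorem1.1} then delivers, for each $\eps>0$,
\[
M_{k,r}(X;\bfa)=N_k(X;\bfvarphi)=T_k(X)+O(X^{\frac12(k-r)(k-r+1)+1+\eps}),
\]
which is exactly the asymptotic formula asserted in the corollary.

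Finally I would check that this is a genuine paucity statement. One has $T_k(X)=k!X^k+O(X^{k-1})$, and in particular $T_k(X)\gg X^k$. The hypothesis $(k-r)(k-r+1)<2k-2$ is equivalent to $\tfrac12(k-r)(k-r+1)+1<k$, so if $\eps>0$ is chosen small enough that $\tfrac12(k-r)(k-r+1)+1+\eps<k$, then the error term above is $o(X^k)=o(T_k(X))$. Hence $M_{k,r}(X;\bfa)=T_k(X)+o(T_k(X))$, which is precisely the statement that there is a paucity of non-diagonal solutions of \eqref{1.6}. The entire analytic burden is carried by Theorem~\ref{theorem1.1}, so no real obstacle arises; the only steps needing care are the bookkeeping that pins down the degrees $k_j$ — which relies on the coefficients $a_{jl}$ being supported on indices $l\le k-r$ — and the elementary computation of $w(\bfvarphi)$.
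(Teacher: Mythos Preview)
Your proof is correct and follows essentially the same approach as the paper: specialise $\bfvarphi$ to the polynomials defining \eqref{1.6}, read off the degrees $k_j=k-r+j$, compute $w(\bfvarphi)=\tfrac{1}{2}(k-r)(k-r+1)$, and invoke Theorem~\ref{theorem1.1}. The only difference is cosmetic --- you reindex over $1\le j\le r$ and spell out the equivalence $(k-r)(k-r+1)<2k-2\Leftrightarrow \tfrac{1}{2}(k-r)(k-r+1)+1<k$ a bit more explicitly than the paper does.
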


Results analogous to those of Theorem \ref{theorem1.1} and Corollary \ref{corollary1.2} 
for diagonal Diophantine systems involving $r$ equations are almost always limited to 
systems possessing only $r+1$ pairs of variables, and these we now describe. In the case 
$r=1$, Hooley applied sieve methods to investigate the equation
\[
x_1^k+x_2^k=y_1^k+y_2^k,
\]
with $k\ge 3$, and ultimately established the paucity of non-diagonal solutions with a power 
saving (see \cite{Hoo1981, Hoo1996}). Strong conclusions have been derived when $k=3$ 
by Heath-Brown \cite{HB1997} using ideas based on quadratic forms, enhancing earlier work 
of the author that extends from cubes to general cubic polynomials \cite{Woo2000}. When 
$r=2$, $k_1<k_2$ and $k_2\ge 3$, the paucity of non-diagonal solutions has been 
established for the pair of simultaneous equations
\[
\left. \begin{aligned}
x_1^{k_1}+x_2^{k_1}+x_3^{k_1}&=y_1^{k_1}+y_2^{k_1}+y_3^{k_1}\\
x_1^{k_2}+x_2^{k_2}+x_3^{k_2}&=y_1^{k_2}+y_2^{k_2}+y_3^{k_2}
\end{aligned}\right\}.
\]
Sharp results are available in the case $(k_1,k_2)=(1,3)$ (see \cite{VW1995}), with 
non-trivial conclusions available for the exponent pair $(1,k)$ when $k\ge 3$ (see 
\cite{Gre1997, SW1997}). The cases $(2,3)$ and $(2,4)$ were tackled successfully via affine 
slicing methods (see \cite{TW1999, Woo1996}), with the remaining cases of this type 
covered by Salberger \cite{Sal2007} using variants of the determinant method. Most other 
examples in which it is known that there is a paucity of non-diagonal solutions are closely 
related to the Vinogradov system of equations
\[
x_1^j+\ldots +x_{r+1}^j=y_1^j+\ldots +y_{r+1}^j\quad (1\le j\le r).
\]
The paucity problem has been solved here by Vaughan and the author \cite{VW1997}, with 
similar conclusions when the equation of degree $r$ is replaced by one of degree $r+1$. 
Recent work \cite{Woo2021} shows that the missing equation of degree $r$ in this last result 
can be replaced by one of degree $r-d$, provided that $d$ is not too large. Meanwhile, when 
the exponents $k_j$ satisfy \eqref{1.4}, results falling just short of paucity have been 
obtained in general for systems of the shape
\[
x_1^{k_j}+\ldots +x_{r+1}^{k_j}=y_1^{k_j}+\ldots +y_{r+1}^{k_j}\quad (1\le j\le r),
\]
(see \cite{PW2002, Woo1993}). In the special case $\mathbf k=(1,3,\ldots ,2r-1)$, Br\"udern 
and Robert \cite{BR2015} have even established the desired paucity result. We should note 
also that the existence of non-diagonal solutions, often exhibited by remarkable parametric 
formulae, has long been the subject of investigation, as recorded in Gloden's book 
\cite{Glo1944}, with notable recent contributions by Choudhry \cite{Cho1991}.\par  

Taking $k$ large and $r=k-\lfloor \sqrt{2k}\rfloor +1$, we see that \eqref{1.6} constitutes 
a Diophantine system of $r$ equations in $k=r+\sqrt{2r}+O(1)$ pairs of variables having a 
paucity of non-diagonal solutions. Corollary \ref{corollary1.2} therefore exhibits a large class 
of Diophantine systems in which the barrier described in the previous paragraph is 
emphatically surmounted. There are two exceptions to the rule noted in that paragraph. First, 
Salberger and the author \cite[Corollary 1.4 and Theorem 5.2]{SW2010} have established 
the paucity of non-diagonal solutions in situations where the underlying equations have very 
large degree in terms of the number of variables. Second, work of Bourgain 
{\it et al.}~\cite[Theorem 26]{BGKS2014} and Heap 
{\it et al.}~\cite[Theorem 1.2]{HSW2021} examines systems of equations determined by 
relations of divisor type having the shape
\begin{equation}\label{1.7}
(x_1+\tet)(x_2+\tet)\cdots (x_k+\tet)=(y_1+\tet)(y_2+\tet)\cdots (y_k+\tet),
\end{equation}
in which $\tet\in \dbC$ is algebraic of degree $d$ over $\dbQ$. When $d\le k$, this relation 
generates $d$ independent symmetric Diophantine equations, and it is shown that the 
number of integral solutions of \eqref{1.7} with $1\le x_i,y_i\le X$ is asymptotically 
$T_k(X)+O(X^{k-d+1+\eps})$. In particular, when $d=2$, we obtain a pair of 
simultaneous Diophantine equations in $k$ variables having a paucity of non-diagonal 
solutions. For example, when $k=4$ and $\tet=\sqrt{-1}$, we obtain the system
\[
\left.{\begin{aligned}
x_1x_2x_3x_4-&x_1x_2-x_2x_3-x_3x_4-x_4x_1-x_2x_4-x_1x_3\\
&=y_1y_2y_3y_4-y_1y_2-y_2y_3-y_3y_4-y_4y_1-y_2y_4-y_1y_3\\
x_1x_2x_3+x_2&x_3x_4+x_3x_4x_1+x_4x_1x_2-x_1-x_2-x_3-x_4\\
&=y_1y_2y_3+y_2y_3y_4+y_3y_4y_1+y_4y_1y_2-y_1-y_2-y_3-y_4
\end{aligned}}\right\} ,
\]
having $4!X^4+O(X^{3+\eps})$ integral solutions with $1\le x_i,y_i\le X$.\par

We have avoided discussion of systems \eqref{1.1} containing the equation
\begin{equation}\label{1.8}
x_1x_2\cdots x_k=y_1y_2\cdots y_k.
\end{equation}
Here, when $r\ge 2$, one may parametrise the solutions of \eqref{1.8} in the shape
\begin{align*}
x_1=\alp_1\alp_2\cdots \alp_k,\quad x_2&=\bet_1\bet_2\cdots \bet_k,\quad \ldots ,\quad 
x_k=\ome_1\ome_2\cdots \ome_k,\\
y_1=\alp_1\bet_1\cdots \ome_1,\quad y_2&=\alp_2\bet_2\cdots \ome_2,\quad \ldots ,
\quad y_k=\alp_k\bet_k\cdots \ome_k,
\end{align*}
to provide a paucity result by simple elimination. The reader will find all of the ideas 
necessary to complete this elementary exercise in \cite{VW1995}.\par

This memoir is organised as follows. In \S2 we derive a multiplicative relation amongst the 
variables $\bfx,\bfy$ of the system \eqref{1.1}. This may be applied to obtain Theorem 
\ref{theorem1.1} and Corollary \ref{corollary1.2}. The polynomials $\varphi_j(\bfz)$ that 
are the subject of Theorem \ref{theorem1.1} are integral linear combinations of the 
elementary symmetric polynomials $\sig_1(\bfz),\ldots ,\sig_k(\bfz)$. In \S3 we examine 
the extent to which our methods are applicable when the polynomials $\varphi_j(\bfz)$ are 
permitted to depend non-linearly on $\sig_1(\bfz),\ldots ,\sig_k(\bfz)$.\par

Our basic parameter is $X$, a sufficiently large positive number. Whenever $\eps$ appears 
in a statement, either implicitly or explicitly, we assert that the statement holds for each 
$\eps>0$. We make frequent use of vector notation in the form $\bfx=(x_1,\ldots,x_k)$. 
Here, the dimension $k$ will be evident to the reader from the ambient context.\medskip
 
\noindent {\bf Acknowledgements:} The author's work is supported by NSF grants 
DMS-2001549 and DMS-1854398. We thank the referee for useful comments.

\section{Multiplicative relations from symmetric polynomials} Our initial objective in this 
section is to obtain a multiplicative relation between the variables underlying the system 
\eqref{1.1}. Let $\varphi_1,\ldots ,\varphi_r$ be polynomials of the shape \eqref{1.3} 
having respective degrees $k_1,\ldots ,k_r$ satisfying \eqref{1.4}. We define the 
complementary set of exponents $\calR=\calR(\bfvarphi)$ by putting
\begin{equation}\label{2.1}
\calR(\bfvarphi)=\{1,\ldots ,k\}\setminus \{k_1,\ldots ,k_r\}.
\end{equation}
The counting function $N_k(X;\bfvarphi)$ remains unchanged when we replace the 
polynomials $\varphi_j$ $(1\le j\le r)$ by any collection of linearly independent integral 
linear combinations, and thus we may suppose that these polynomials take the shape
\begin{equation}\label{2.2}
\varphi_j(\bfz)=a_j\sig_{k_j}(\bfz)-\sum_{\substack{1\le l<k_j\\ l\in \calR}}
b_{jl}\sig_l(\bfz)\quad (1\le j\le r),
\end{equation}
where $a_j\in \dbZ\setminus \{0\}$ and $b_{jl}\in \dbZ$.\par

Given a solution $\bfx,\bfy$ of the system \eqref{1.1} counted by $N_k(X;\bfvarphi)$, we 
define the integers $h_l=h_l(\bfx,\bfy)$ for $l\in \calR$ by putting
\begin{equation}\label{2.3}
h_l(\bfx,\bfy)=\sig_l(\bfx)-\sig_l(\bfy).
\end{equation}
Thus, since $1\le x_i,y_i\le X$ $(1\le i\le k)$, one has $|h_l(\bfx,\bfy)|\le 2^kX^l$ 
$(l\in \calR)$. By making use of the relations \eqref{2.2} and \eqref{2.3}, the system 
\eqref{1.1} becomes
\begin{equation}\label{2.4}
a_j\left(\sig_{k_j}(\bfx)-\sig_{k_j}(\bfy)\right) =\sum_{\substack{1\le l<k_j\\ l\in \calR}}
b_{jl}h_l\quad (1\le j\le r).
\end{equation}
Then, by wielding \eqref{1.2} in combination with \eqref{2.3} and \eqref{2.4}, we obtain
\begin{align}
\prod_{i=1}^k(t+x_i)-\prod_{i=1}^k(t+y_i)&=\sum_{m=0}^kt^{k-m}
\left(\sig_m(\bfx)-\sig_m(\bfy)\right) \notag \\
&=\sum_{m\in \calR}h_mt^{k-m}+\sum_{j=1}^ra_j^{-1}t^{k-k_j}
\sum_{\substack{1\le l<k_j\\ l\in \calR}}b_{jl}h_l.\label{2.5}
\end{align}
Put $A=a_1a_2\cdots a_r$ and $c_j=A/a_j$ $(1\le j\le r)$. Also, define
\[
\psi_l(t)=At^{k-l}+\sum_{\substack{1\le j\le r\\ k_j>l}}c_jb_{jl}t^{k-k_j}\quad (l\in \calR),
\]
and then set
\begin{equation}\label{2.6}
\Psi(t;\bfh)=\sum_{l\in \calR}h_l\psi_l(t).
\end{equation}
Notice here that the polynomial $\Psi(t;\bfh)$ has integral coefficients and degree at most 
$k-1$ with respect to $t$. Moreover, it follows from \eqref{2.5} that
\begin{equation}\label{2.7}
A\biggl( \prod_{i=1}^k(t+x_i)-\prod_{i=1}^k(t+y_i)\biggr) =\Psi(t;\bfh).
\end{equation}

\par We may now record the multiplicative relation employed in our proof of Theorem 
\ref{theorem1.1}. Suppose that $\bfx,\bfy$ is an integral solution of the system 
\eqref{1.1}, where $\varphi_1,\ldots ,\varphi_r$ are symmetric polynomials as described 
above. Then, with $h_l=\sig_l(\bfx)-\sig_l(\bfy)$ $(l\in \calR)$, we deduce by substituting 
$t=-y_j$ into \eqref{2.7} that
\begin{equation}\label{2.8}
A\prod_{i=1}^k(x_i-y_j)=\Psi(-y_j;\bfh)\quad (1\le j\le k).
\end{equation}

\begin{proof}[The proof of Theorem \ref{theorem1.1}] We divide the solutions of the 
system \eqref{1.1} with $1\le x_i,y_i\le X$ into two types. A solution $\bfx,\bfy$ will be 
called {\it potentially diagonal} when $\{x_1,\ldots ,x_k\}=\{y_1,\ldots ,y_k\}$, and 
{\it non-diagonal} when, for some index $j$ with $1\le j\le k$, one has either 
$y_j\not\in \{x_1,\ldots ,x_k\}$ or $x_j\not \in \{y_1,\ldots ,y_k\}$.\par

We first consider potentially diagonal solutions $\bfx,\bfy$. Suppose, if possible, that 
$\bfx,\bfy$ satisfies the condition that the polynomial $\Psi(t;\bfh)$ defined in \eqref{2.6} is 
identically zero as a polynomial in $t$. Since $A\ne 0$, it follows from \eqref{2.7} that
\[
\prod_{i=1}^k(t+x_i)=\prod_{i=1}^k(t+y_i).
\]
The roots of the polynomials on the left and right hand sides here must be identical, and so 
too must be their respective multiplicities. Thus $(x_1,\ldots ,x_k)$ is a permutation of 
$(y_1,\ldots ,y_k)$, and it follows that the number of solutions $\bfx,\bfy$ of this type 
counted by $N_k(X;\bfvarphi)$ is precisely $T_k(X)$.\par

Suppose next that $\bfx,\bfy$ is a potentially diagonal solution with $\Psi(t;\bfh)$ not 
identically zero, where $\bfh$ is defined via \eqref{2.3}. In this situation, there are distinct 
integers $w_1,\ldots ,w_s$, for some integer $s$ with $1\le s\le k$, such that 
\begin{equation}\label{2.9}
\{x_1,\ldots ,x_k\}=\{y_1,\ldots ,y_k\}=\{w_1,\ldots ,w_s\}.
\end{equation}
Consider any tuple of integers $\bfh$ with $|h_l|\le 2^kX^l$ $(l\in \calR)$ for which the  
polynomial $\Psi(t;\bfh)$ is not identically zero. This polynomial has degree at most $k-1$, 
and hence there is an integer $\xi$ with $1\le \xi\le k$ for which the integer 
$\Tet=\Psi(\xi;\bfh)$ is non-zero. We see from \eqref{2.7} that 
$(\xi+w_1)\cdots (\xi+w_s)$ divides $\Tet$. Thus, an elementary divisor function estimate 
(see for example \cite[Theorem 317]{HW2008}) shows the number of possible choices for 
$\xi+w_1,\ldots ,\xi+w_s$ to be at most
\[
\sum_{\substack{d_1,\ldots ,d_s\in \mathbb N\\ d_1\cdots d_s|\Tet}}1\le 
\Bigl( \sum_{d|\Tet}1\Bigr)^s=d(|\Tet|)^s\ll |\Tet|^\eps.
\]
Since $\Tet=O(X^k)$, we see that there are at most $O(X^\eps)$ possible choices for 
$\xi+w_1,\ldots ,\xi+w_s$, and hence also for $w_1,\ldots ,w_s$. From here, the 
relation \eqref{2.9} implies that for these fixed choices of $\bfh$, the number of possible 
choices for $\bfx$ and $\bfy$ is also $O(X^\eps)$. On recalling \eqref{1.5}, we discern 
that the total number of choices for the tuple $\bfh$ with $|h_l|\le 2^kX^l$ $(l\in \calR)$ is 
$O(X^{w(\bfvarphi)})$. Let $T_k^*(X;\bfvarphi)$ denote the number of potentially 
diagonal solutions $\bfx,\bfy$ not counted by $T_k(X)$. For each of the 
$O(X^{w(\bfvarphi)})$ choices for $\bfh$ associated with these solutions, we have shown 
that there are $O(X^\eps)$ solutions $\bfx,\bfy$, whence
\begin{equation}\label{2.10}
T_k^*(X;\bfvarphi)\ll X^{w(\bfvarphi)+\eps}.
\end{equation}

\par Finally, suppose that $\bfx,\bfy$ is a non-diagonal solution of the system \eqref{1.1}. 
By invoking symmetry (twice), we may suppose that $y_k\not\in \{x_1,\ldots ,x_k\}$, 
whence \eqref{2.8} shows the integer $\Tet =\Psi(-y_k;\bfh)$ to be non-zero. There are 
$O(X^{w(\bfvarphi)})$ possible choices for $\bfh$ and $O(X)$ possible choices for $y_k$ 
corresponding to this situation. Fixing any one such, and noting that $\Tet=O(X^k)$, an 
elementary divisor function estimate again shows that there are $O(X^\eps)$ possible 
choices for $x_1-y_k,\ldots ,x_k-y_k$ satisfying \eqref{2.8}. Fix any one such choice for 
these divisors. Since $y_k$ is already fixed, it follows that $x_1,\ldots ,x_k$ are likewise 
fixed.\par

It remains to determine $y_1,\ldots ,y_{k-1}$. Since the tuple $\bfh$ has already been 
fixed and $\Psi(t;\bfh)$ has degree at most $k-1$ with respect to $t$, the polynomial
\[
A\prod_{i=1}^k(x_i-t)-\Psi(-t;\bfh)
\]
has degree $k$ with respect to $t$, and has all of its coefficients already fixed. It therefore 
follows from \eqref{2.8} that there are at most $k$ choices for each of the variables 
$y_1,\ldots ,y_{k-1}$. Let $T_k^\dagger(X;\bfvarphi)$ denote the number of non-diagonal 
solutions $\bfx,\bfy$ counted by $N_k(X;\bfvarphi)$. Then we may conclude that
\begin{equation}\label{2.11}
T_k^\dagger(X;\bfvarphi)\ll X^{w(\bfvarphi)+1+\eps}.
\end{equation}

\par On recalling our opening discussion together with the estimates \eqref{2.10} and 
\eqref{2.11}, we arrive at the upper bound
\begin{equation}\label{2.12}
N_k(X;\bfvarphi)-T_k(X)=T_k^*(X;\bfvarphi)+T_k^\dagger(X;\bfvarphi)\ll 
X^{w(\bfvarphi)+1+\eps}.
\end{equation}
This delivers the first conclusion of Theorem \ref{theorem1.1}. On recalling the definition 
\eqref{1.5} of $w(\bfvarphi)$, it follows that when 
$k_1+\ldots +k_r\ge \tfrac{1}{2}k(k-1)+2$, one has 
$w(\bfvarphi)\le k-2$, and thus the second conclusion of Theorem \ref{theorem1.1} is 
immediate from \eqref{2.12} and the asymptotic formula $T_k(X)=k!X^k+O(X^{k-1})$.
\end{proof}

\begin{proof}[The proof of Corollary \ref{corollary1.2}] Write
\[
\varphi_j(\bfz)=\sig_j(z_1,\ldots ,z_k)+\sum_{l=1}^{k-r}a_{jl}\sig_l(z_1,\ldots ,z_k)\quad 
(k-r+1\le j\le k).
\]
Then we see that the system \eqref{1.6} is comprised of symmetric polynomials having 
degrees $k-r+1,\ldots ,k$. For this system, it follows from \eqref{1.5} that
\[
w(\bfvarphi)=\tfrac{1}{2}k(k+1)-\sum_{j=k-r+1}^k j=\tfrac{1}{2}(k-r)(k-r+1).
\]
We therefore deduce from Theorem \ref{theorem1.1} that
\[
M_{k,r}(X;\bfa)=N_k(X;\bfvarphi)=T_k(X)+O(X^{\frac{1}{2}(k-r)(k-r+1)+1+\eps}),
\]
and so there is a paucity of non-diagonal solutions in the system \eqref{1.6} provided that 
$(k-r)(k-r+1)<2k-2$. This completes the proof of the corollary.
\end{proof}

\section{Non-linear variants} The system of Diophantine equations \eqref{1.1} underlying 
Theorem \ref{theorem1.1} and Corollary \ref{corollary1.2} possesses features inherently 
linear in nature. Indeed, as is evident from the discussion initiating \S2, we are able to 
restrict attention to integral linear combinations of elementary symmetric polynomials of the 
shape \eqref{2.2}. This is a convenient but not essential simplification, as we now explain.

\par We now describe the symmetric polynomials presently in our field of view. Recall the 
exponents $k_1,\ldots ,k_r$ satisfying \eqref{1.4}, the complementary set of exponents 
$\calR=\calR(\bfvarphi)$ defined in \eqref{2.1}, and the definition \eqref{1.5} of 
$w(\bfvarphi)$. Putting $R=\text{card}(\calR)$, we label the elements of $\calR$ so that 
$\calR=\{l_1,\ldots ,l_R\}$. We consider symmetric polynomials in the variables 
$\bfz=(z_1,\ldots ,z_k)$ of the shape
\begin{equation}\label{3.1}
\varphi_j(\bfz)=a_j\sig_{k_j}(\bfz)-\Ups_j(\sig_{l_1}(\bfz),\ldots ,\sig_{l_R}(\bfz))\quad 
(1\le j\le r),
\end{equation}
where $a_j\in \dbZ\setminus\{0\}$ and $\Ups_j\in \dbZ[s_1,\ldots ,s_R]$.

\begin{theorem}\label{theorem3.1} Let $\varphi_1,\ldots ,\varphi_r$ be symmetric 
polynomials of the shape \eqref{3.1} with respective degrees $k_1,\ldots ,k_r$ satisfying 
\eqref{1.4}. Then
\[
N_k(X;\bfvarphi)=T_k(X)+O(X^{2w(\bfvarphi)+1+\eps}).
\]
In particular, when $k_1+\ldots +k_r\ge \tfrac{1}{2}k^2+1$, one has
\[
N_k(X;\bfvarphi)=k!X^k+O(X^{k-1+\eps}).
\]
\end{theorem}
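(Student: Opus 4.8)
The plan is to mimic the proof of Theorem \ref{theorem1.1}, adapting each step to account for the non-linearity of the polynomials $\Ups_j$. Starting from a solution $\bfx,\bfy$ of \eqref{1.1} with $1\le x_i,y_i\le X$, I would again set $h_l=\sig_l(\bfx)-\sig_l(\bfy)$ for $l\in\calR$, so that $|h_l|\ll X^l$. The equations \eqref{1.1}, combined with \eqref{3.1}, now read $a_j(\sig_{k_j}(\bfx)-\sig_{k_j}(\bfy))=\Ups_j(\sig_{l_1}(\bfx),\ldots)-\Ups_j(\sig_{l_1}(\bfy),\ldots)$; the key point is that the right-hand side, while no longer linear in $\bfh$, is still a \emph{fixed} polynomial expression in the quantities $\sig_l(\bfx)$ and $\sig_l(\bfy)$ for $l\in\calR$. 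Writing $\sig_l(\bfx)=\sig_l(\bfy)+h_l$, the right-hand side becomes a polynomial in the $2R$ quantities $\{\sig_l(\bfy):l\in\calR\}$ and $\{h_l:l\in\calR\}$. Thus, once we fix the $R$ values $\sig_l(\bfy)$ (with $|\sig_l(\bfy)|\ll X^l$, giving $O(X^{w(\bfvarphi)})$ choices since $\sum_{l\in\calR}l=w(\bfvarphi)$) and the $R$ values $h_l$ (another $O(X^{w(\bfvarphi)})$ choices), the analogue of \eqref{2.7} holds: $A(\prod_i(t+x_i)-\prod_i(t+y_i))$ equals a polynomial in $t$ of degree at most $k-1$ with completely determined integer coefficients. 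This explains the doubling $w(\bfvarphi)\mapsto 2w(\bfvarphi)$ in the exponent.

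With this setup, the trichotomy into diagonal, potentially-diagonal-but-not-diagonal, and non-diagonal solutions proceeds exactly as before. For solutions where the analogue of $\Psi(t;\bfh)$ vanishes identically, one concludes as in Theorem \ref{theorem1.1} that $\bfx$ is a permutation of $\bfy$, contributing precisely $T_k(X)$. For the remaining potentially diagonal solutions, the polynomial identity forces some nonzero integer value $\Tet=O(X^k)$ that is divisible by $(\xi+w_1)\cdots(\xi+w_s)$ for a suitable $\xi\in\{1,\ldots,k\}$; the divisor bound gives $O(X^\eps)$ choices for the $w_i$ and hence for $\bfx,\bfy$. Summing over the $O(X^{2w(\bfvarphi)})$ choices of the auxiliary data $(\sig_l(\bfy),h_l)_{l\in\calR}$ yields $O(X^{2w(\bfvarphi)+\eps})$. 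For non-diagonal solutions, after arranging $y_k\notin\{x_1,\ldots,x_k\}$ by symmetry, substituting $t=-y_k$ into the analogue of \eqref{2.7} produces a nonzero integer of size $O(X^k)$ divisible by $\prod_i(x_i-y_k)$; fixing the $O(X)$ choices for $y_k$, the divisor bound fixes $x_1,\ldots,x_k$ up to $O(X^\eps)$, and then the degree-$k$ polynomial relation in $t$ (with all coefficients now determined) pins down each $y_i$ to $O(1)$ choices. This contributes $O(X^{2w(\bfvarphi)+1+\eps})$, and since $X^{2w(\bfvarphi)}$ dominates $X^{2w(\bfvarphi)+1}/X$, the non-diagonal term is the bottleneck, giving the stated bound $N_k(X;\bfvarphi)=T_k(X)+O(X^{2w(\bfvarphi)+1+\eps})$.

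The one genuinely new wrinkle, and what I expect to be the main technical obstacle, is verifying carefully that the coefficients of the polynomial $A(\prod_i(t+x_i)-\prod_i(t+y_i))$ really are determined once we fix the $2R$ auxiliary integers. Concretely: $\sig_m(\bfx)-\sig_m(\bfy)$ for $m\in\calR$ is just $h_m$, so those coefficients are immediate; but for $m=k_j\notin\calR$ we need $\sig_{k_j}(\bfx)-\sig_{k_j}(\bfy)=a_j^{-1}\bigl(\Ups_j(\sig_{l_1}(\bfy)+h_{l_1},\ldots)-\Ups_j(\sig_{l_1}(\bfy),\ldots)\bigr)$, which is indeed a fixed rational number once the auxiliary data is fixed — and clearing the denominator $A=a_1\cdots a_r$ makes everything integral, exactly as in the passage from \eqref{2.5} to \eqref{2.7}. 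One should also check the count $\sum_{l\in\calR}l=w(\bfvarphi)$: this follows from $\sum_{l=1}^k l=\tfrac12 k(k+1)$ together with \eqref{1.5} and \eqref{2.1}. Finally, the "in particular" clause follows since $k_1+\cdots+k_r\ge\tfrac12 k^2+1$ forces $w(\bfvarphi)=\tfrac12 k(k+1)-(k_1+\cdots+k_r)\le\tfrac12 k-1$, so $2w(\bfvarphi)+1\le k-1$, and one invokes $T_k(X)=k!X^k+O(X^{k-1})$ as in the proof of Theorem \ref{theorem1.1}.
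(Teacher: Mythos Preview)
Your proposal is correct and follows essentially the same approach as the paper: both arguments recognise that in the non-linear setting one must fix $2R$ auxiliary integers rather than $R$, and then rerun the trichotomy from Theorem~\ref{theorem1.1} verbatim. The only cosmetic difference is that the paper parametrises the auxiliary data directly as $(\sig_{l_m}(\bfx),\sig_{l_m}(\bfy))_{1\le m\le R}$, whereas you use the equivalent pair $(\sig_l(\bfy),h_l)_{l\in\calR}$ with $h_l=\sig_l(\bfx)-\sig_l(\bfy)$; either way the count is $O(X^{2w(\bfvarphi)})$ and the remainder of the argument is identical.
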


A specialisation again makes for accessible conclusions. Fix the polynomials 
$\Ups_j\in \dbZ[s_1,\ldots ,s_R]$ for $k-r+1\le j\le k$, and denote by $L_{k,r}(X;\bfUps)$ 
the number of solutions of the simultaneous equations
\begin{align}
\sig_j(\bfx)+\Ups_j(\sig_1(\bfx),\ldots ,\sig_{k-r}(\bfx))=\sig_j(\bfy)+\Ups_j(\sig_1&(\bfy),
\ldots ,\sig_{k-r}(\bfy)) \notag \\
&(k-r+1\le j\le k),\label{3.2}
\end{align}
in variables $\bfx=(x_1,\ldots ,x_k)$ and $\bfy=(y_1,\ldots ,y_k)$ with $1\le x_i,y_i\le X$.

\begin{corollary}\label{corollary3.2} Suppose that $k,r\in \dbN$ satisfy $(k-r)(k-r+1)<k-1$. 
Then there is a paucity of non-diagonal solutions in the system \eqref{3.2}. In particular,
\[
L_{k,r}(X;\bfUps)=T_k(X)+O(X^{(k-r)(k-r+1)+1+\eps}).
\]
\end{corollary}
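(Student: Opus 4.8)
The plan is to derive Corollary~\ref{corollary3.2} from Theorem~\ref{theorem3.1} by identifying the system \eqref{3.2} as a special case of the systems governed by polynomials of the shape \eqref{3.1}, and then computing $w(\bfvarphi)$ explicitly. First I would write
\[
\varphi_j(\bfz)=\sig_j(\bfz)+\Ups_j(\sig_1(\bfz),\ldots ,\sig_{k-r}(\bfz))\quad (k-r+1\le j\le k),
\]
so that the system \eqref{3.2} becomes $\varphi_j(\bfx)=\varphi_j(\bfy)$ for $k-r+1\le j\le k$. These $r$ symmetric polynomials have degrees $k-r+1,k-r+2,\ldots ,k$ (the term $\sig_j(\bfz)$ being the unique one of top degree $j$, since $\Ups_j$ involves only $\sig_1,\ldots ,\sig_{k-r}$, each of degree at most $k-r<k-r+1\le j$), and these degrees satisfy the strict-increase condition \eqref{1.4} with $k_r=k$. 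Moreover the exponents $\{k_1,\ldots ,k_r\}=\{k-r+1,\ldots ,k\}$ have complementary set $\calR=\{1,\ldots ,k-r\}$, so $\Ups_j$ is indeed a polynomial in $\sig_{l}(\bfz)$ with $l\in\calR$ as demanded by \eqref{3.1}, and here $a_j=1$ for all $j$. Thus Theorem~\ref{theorem3.1} applies directly to $L_{k,r}(X;\bfUps)=N_k(X;\bfvarphi)$.

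Next I would compute $w(\bfvarphi)$ from its definition \eqref{1.5}. With $k_j=k-r+j$ for $1\le j\le r$, one has $k_1+\ldots +k_r=\sum_{j=k-r+1}^{k}j=\tfrac12 k(k+1)-\tfrac12(k-r)(k-r+1)$, and therefore
\[
w(\bfvarphi)=\tfrac12 k(k+1)-\sum_{j=k-r+1}^{k}j=\tfrac12(k-r)(k-r+1),
\]
exactly as in the proof of Corollary~\ref{corollary1.2}. Substituting this into the bound of Theorem~\ref{theorem3.1} gives $2w(\bfvarphi)+1=(k-r)(k-r+1)+1$, whence
\[
L_{k,r}(X;\bfUps)=N_k(X;\bfvarphi)=T_k(X)+O(X^{(k-r)(k-r+1)+1+\eps}),
\]
which is precisely the asserted asymptotic formula. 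Finally, the paucity of non-diagonal solutions follows because the error term is $o(T_k(X))=o(k!X^k)$ as soon as $(k-r)(k-r+1)+1<k$, i.e.\ $(k-r)(k-r+1)<k-1$, which is the hypothesis of the corollary; one also uses the standard count $T_k(X)=k!X^k+O(X^{k-1})$ to see that the main term is genuinely of size $k!X^k$.

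There is no serious obstacle here: the corollary is a formal consequence of Theorem~\ref{theorem3.1} together with an elementary arithmetic-progression sum, and the only point requiring a moment's care is the verification that the degrees are $k-r+1,\ldots ,k$ and satisfy \eqref{1.4} — that is, that the polynomials $\Ups_j$ cannot raise the degree of $\varphi_j$ above $j$ nor interfere with the strict ordering of the top-degree parts. This is immediate once one observes that every monomial of $\Ups_j(\sig_1(\bfz),\ldots ,\sig_{k-r}(\bfz))$ has total degree in $\bfz$ at most a multiple of $k-r$ in each factor but, more to the point, that its contribution to the coefficient of the degree-$j$ homogeneous part vanishes whenever $j>k-r$, which holds throughout the range $k-r+1\le j\le k$.
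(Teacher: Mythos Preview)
Your argument is the same as the paper's: recognise \eqref{3.2} as an instance of \eqref{3.1} with $\{k_1,\ldots,k_r\}=\{k-r+1,\ldots,k\}$, compute $w(\bfvarphi)=\tfrac{1}{2}(k-r)(k-r+1)$, and invoke Theorem~\ref{theorem3.1}.

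One small correction, though it does no real damage. Your claim that $\Ups_j(\sig_1(\bfz),\ldots,\sig_{k-r}(\bfz))$ cannot raise the degree of $\varphi_j$ above $j$ is false in general: $\Ups_j$ is permitted to be a polynomial of arbitrary degree in its arguments, so for instance $\Ups_j=s_{k-r}^N$ contributes a term of degree $N(k-r)$ in $\bfz$, which for large $N$ exceeds $j$. Thus the assertion that ``each of $\sig_1,\ldots,\sig_{k-r}$ has degree at most $k-r$'' does not by itself bound $\deg\varphi_j$, and your final paragraph's justification is not valid. This is harmless for the proof, however, because in Theorem~\ref{theorem3.1} the integers $k_j$ function structurally---they index the distinguished terms $a_j\sig_{k_j}$ in \eqref{3.1} and thereby determine $\calR$ and $w(\bfvarphi)$---and the argument proving that theorem never uses $\deg\varphi_j=k_j$ in any essential way. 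The paper's own proof of the corollary accordingly does not attempt any such degree verification; you may simply drop that paragraph.
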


\begin{proof}[The proof of Theorem \ref{theorem3.1}] Our present strategy is very similar 
to that wrought against Theorem \ref{theorem1.1}. Given a solution $\bfx,\bfy$ of the 
system \eqref{1.1} counted by $N_k(X;\bfvarphi)$, define $h_m(\bfz)=\sig_{l_m}(\bfz)$ 
for $\bfz\in \{\bfx,\bfy\}$ and $1\le m\le R$. In view of \eqref{3.1}, the system \eqref{1.1} now 
becomes
\begin{equation}\label{3.3}
a_j(\sig_{k_j}(\bfx)-\sig_{k_j}(\bfy))=\Ups_j(\bfh(\bfx))-\Ups_j(\bfh(\bfy))\quad 
(1\le j\le r).
\end{equation}
In the present non-linear scenario there may be some index $j$ for which the integer on 
the right hand side of \eqref{3.3} is not equal to $\Ups_j(\bfh(\bfx)-\bfh(\bfy))$. The 
argument here therefore contains extra complications, with weaker quantitative conclusions 
than Theorem \ref{theorem1.1}.\par

Put $A=a_1a_2\cdots a_r$ and $c_j=A/a_j$ $(1\le j\le r)$. Then, by applying the identity 
\eqref{1.2} together with \eqref{3.3}, we obtain for $1\le j\le r$ the relation
\begin{equation}\label{3.4}
A\Bigl( \prod_{i=1}^k(t+x_i)-\prod_{i=1}^k(t+y_i)\Bigr)=\Psi(t;\bfh)-\Psi(t;\bfg),
\end{equation}
where $\bfh=(h_1(\bfx),\ldots ,h_R(\bfx))$, $\bfg=(h_1(\bfy),\ldots ,h_R(\bfy))$, and we 
write
\[
\Psi(t;\bfe)=A\sum_{m\in \calR}e_mt^{k-m}+\sum_{j=1}^rc_jt^{k-k_j}\Ups_j(\bfe).
\]
If $\bfx,\bfy$ is a solution of \eqref{1.1} counted by $N_k(X;\bfvarphi)$, then 
with $h_m=\sig_{l_m}(\bfx)$ and $g_m=\sig_{l_m}(\bfy)$ for $1\le m\le R$, we deduce 
by setting $t=-y_j$ in \eqref{3.4} that
\begin{equation}\label{3.5}
A\prod_{i=1}^k(x_i-y_j)=\Psi(-y_j;\bfh)-\Psi(-y_j;\bfg)\quad (1\le j\le k).
\end{equation}

\par We now follow the path already trodden in the proof of Theorem \ref{theorem1.1} 
presented in \S2. Suppose first that $\bfx,\bfy$ is a potentially diagonal solution. When the 
polynomial $\Psi(-t;\bfh)-\Psi(-t;\bfg)$ is identically zero, we find that $\bfx,\bfy$ is counted 
by $T_k(X)$. Meanwhile, when instead this polynomial is not identically zero, a divisor 
function argument shows that for each fixed choice of $\bfh$ and $\bfg$, there are 
$O(X^\eps)$ possible choices for $\bfx$ and $\bfy$. We have 
$|h_m(\bfx)|\le 2^kX^{l_m}$ and $|h_m(\bfy)|\le 2^kX^{l_m}$, so the total number of 
choices for $(h_1,\ldots ,h_R)$ and $(g_1,\ldots ,g_R)$ is $O(X^{2w(\bfvarphi)})$, where 
$w(\bfvarphi)$ is defined by \eqref{1.5}. Thus, the total number of potentially diagonal 
solutions is equal to $T_k(X)+O(X^{2w(\bfvarphi)+\eps})$.\par

Suppose next that $\bfx,\bfy$ is a non-diagonal solution of the system \eqref{1.1}. By 
symmetry, we may again suppose that $y_k\not\in \{x_1,\ldots ,x_k\}$, and \eqref{3.5} 
shows that the integer $\Tet =\Psi(-y_j;\bfh)-\Psi(-y_j;\bfg)$ is non-zero. There are 
$O(X^{2w(\bfvarphi)})$ possible choices for $\bfh$ and $\bfg$, and $O(X)$ possible 
choices for $y_k$ in this scenario. Fix any one such choice, and note from \eqref{3.5} that 
$\Tet=O(X^k)$. An elementary divisor function estimate shows there to be $O(X^\eps)$ 
choices for $x_1-y_k,\ldots ,x_k-y_k$ satisfying \eqref{3.5}. Fixing any one such choice 
fixes the integers $x_1,\ldots ,x_k$. An argument essentially identical to that applied in the 
proof of Theorem \ref{theorem1.1} shows from here that there are $O(1)$ possible 
choices for $y_1,\ldots ,y_{k-1}$. Hence, the number of non-diagonal solutions of the 
system \eqref{1.1} is $O(X^{2w(\bfvarphi)+1+\eps})$.\par

Combining the two contributions to $N_k(X;\bfvarphi)$ that we have obtained yields
\[
N_k(X;\bfvarphi)-T_k(X)\ll X^{2w(\bfvarphi)+1+\eps},
\]
confirming the first conclusion of Theorem \ref{theorem3.1}. The definition \eqref{1.5} of 
$w(\bfvarphi)$, moreover, implies that when $k_1+\ldots +k_r\ge \tfrac{1}{2}k^2+1$, one 
has $2w(\bfvarphi)\le k-2$. The second conclusion of Theorem \ref{theorem3.1} therefore 
follows directly from the first.
\end{proof}

\begin{proof}[The proof of Corollary \ref{corollary3.2}] When $k-r+1\le j\le k$, write
\[
\varphi_j(\bfz)=\sig_j(z_1,\ldots ,z_k)+\Ups_j(\sig_1(z_1,\ldots ,z_k),\ldots ,
\sig_{k-r}(z_1,\ldots ,z_k)).
\]
Then we see from \eqref{1.5} that the system \eqref{3.2} is comprised of symmetric 
polynomials with $w(\bfvarphi)=\tfrac{1}{2}(k-r)(k-r+1)$. Theorem \ref{theorem3.1} 
therefore shows that
\[
L_{k,r}(X;\bfUps)=N_k(X;\bfvarphi)=T_k(X)+O(X^{(k-r)(k-r+1)+1+\eps}),
\]
and provided that $(k-r)(k-r+1)<k-1$, there is a paucity of non-diagonal solutions in the 
system \eqref{3.2}. This completes the proof of the corollary.
\end{proof}

\bibliographystyle{amsbracket}

\begin{thebibliography}{18}

\bibitem{BGKS2014} J. Bourgain, M. Z. Garaev, S. V. Konyagin and I. E. Shparlinski, 
\emph{Multiplicative congruences with variables from short intervals}, J. Anal. Math. 
\textbf{124} (2014), 117--147.

\bibitem{BR2015} J. Br\"udern and O. Robert, \emph{Rational points on linear slices of 
diagonal hypersurfaces}, Nagoya Math. J. \textbf{218} (2015), 51--100.

\bibitem{Cho1991} A. Choudhry, \emph{Symmetric Diophantine systems}, Acta Arith. 
\textbf{59} (1991), no. 3, 291--307.

\bibitem{Glo1944} A. Gloden, \emph{Mehrgradige Gleichungen}, P. Noordhoff, Groningen, 
1944.

\bibitem{Gre1997} G. Greaves, \emph{Some Diophantine equations with almost all 
solutions trivial}, Mathematika \textbf{44} (1997), no. 1, 14--36.

\bibitem{HW2008}
G. H. Hardy and E. M. Wright, \emph{An introduction to the theory of numbers}, 6th edition, Oxford University Press, Oxford, 2008.

\bibitem{HSW2021} W. Heap, A. Sahay and T. D. Wooley, \emph{A paucity problem 
associated with a shifted integer analogue of the divisor function}, J. Number Theory, in 
press, arxiv:2108.00287; doi:10.1016/j.jnt.2022.05.006.

\bibitem{HB1997} D. R. Heath-Brown, \emph{The density of rational points on cubic 
surfaces}, Acta Arith. \textbf{79} (1997), no. 1, 17--30.

\bibitem{Hoo1981} C. Hooley, \emph{On another sieve method and the numbers that are 
a sum of two $h$th powers}, Proc. London Math. Soc. (3) \textbf{43} (1981), no. 1, 
73--109.

\bibitem{Hoo1996} C. Hooley, \emph{On another sieve method and the numbers that are 
a sum of two $h$th powers: II}, J. Reine Angew. Math. \textbf{475} (1996), 55--75.

\bibitem{PW2002} S. T. Parsell and T. D. Wooley, \emph{A quasi-paucity problem}, 
Michigan Math. J. \textbf{50} (2002), no. 3, 461--469. 

\bibitem{Sal2007} P. Salberger, \emph{Rational points of bounded height on threefolds}, 
Analytic Number Theory, Clay Math. Proc. \textbf{7} (2007), pp. 207--216, Amer. Math. 
Soc., Providence, RI.

\bibitem{SW2010} P. Salberger and T. D. Wooley, \emph{Rational points on complete 
intersections of higher degree, and mean values of Weyl sums}, J. London Math. Soc. (2) 
\textbf{82} (2010), no. 2, 317--342.

\bibitem{SW1997} C. M. Skinner and T. D. Wooley, \emph{On the paucity of non-diagonal 
solutions in certain diagonal Diophantine systems}, Quart. J. Math. Oxford (2) \textbf{48} 
(1997), no. 2, 255--277.

\bibitem{TW1999} W. Y. Tsui and T. D. Wooley, \emph{The paucity problem for 
simultaneous quadratic and biquadratic equations}, Math. Proc. Cambridge Philos. Soc. 
\textbf{126} (1999), no. 2, 209--221. 

\bibitem{VW1995} R. C. Vaughan and T. D. Wooley, \emph{On a certain nonary cubic form 
and related equations}, Duke Math. J. \textbf{80} (1995), no. 3, 669--735.

\bibitem{VW1997} R. C. Vaughan and T. D. Wooley, \emph{A special case of Vinogradov's 
mean value theorem}, Acta Arith. \textbf{79} (1997), no. 3, 193--204.

\bibitem{Woo1993} T. D. Wooley, \emph{A note on symmetric diagonal equations}, Number 
theory with an emphasis on the Markoff spectrum (Provo, UT, 1991), Editors: A. D. 
Pollington and W. Moran, Dekker, New York, 1993, pp. 317--321. 

\bibitem{Woo1996} T. D. Wooley, \emph{An affine slicing approach to certain paucity 
problems}, Analytic Number Theory: Proceedings of a Conference in Honor of Heini 
Halberstam (B.C. Berndt, H. G. Diamond and A. J. Hildebrand, eds.), vol. 2, 1996, pp. 
803--815, Prog. Math. \textbf{139}, Birkh\"auser, Boston.

\bibitem{Woo2000} T. D. Wooley, \emph{Sums and differences of two cubic polynomials}, 
Monatsh. Math. \textbf{129} (2000), no. 2, 159--169.

\bibitem{Woo2021} T. D. Wooley, \emph{Paucity problems and some relatives of 
Vinogradov's mean value theorem}, submitted, arXiv:2107.12238. 

\end{thebibliography}
\providecommand{\bysame}{\leavevmode\hbox to3em{\hrulefill}\thinspace}

\end{document}